\newtheorem{lemma}{Lemma}
\newtheorem{cons}{Corollary}
\newtheorem{theo}{Theorem}
\newtheorem*{cons*}{Corollary}
\newtheorem*{theo*}{Theorem}
\theoremstyle{definition}
\newtheorem{definition}{Definition}
\newtheorem{process}{Process}
\theoremstyle{remark}
\newcommand{\R}{\mathbb{R}}
\newcommand{\C}{\mathcal{C}}
\renewcommand{\Pr}{\mathbb{P}}
\newcommand{\Esp}{\mathbb{E}}
\renewcommand{\P}{\mathbf{P}}
\newcommand{\pper}{p^{\mathrm{per}}}
\newcommand{\ptable}{p^{\mathrm{table}}}
\newcommand{\Cp}{\mathcal{C}^{\mathrm{per}}}
\newcommand{\Ca}{\mathcal{C}}
\title{A probabilistic Hadwiger-Nelson problem}
\author[1]{Thomas Bourgeat}
\author[1]{Marc Heinrich}
\author[1]{Paul Melotti}
\affil[1]{Computer science department,\authorcr
\'Ecole Normale Sup\'erieure, Paris, France\authorcr
		  \texttt{\{first name\}.\{name\}@ens.fr}
}
\author[2]{Jean-Marc Robert}
\affil[2]{Software engineering and IT department,\authorcr
\'Ecole de Technologie Sup\'erieure, Montr\'eal, Canada\authorcr
		  \texttt{jean-marc.robert@etsmtl.ca}
}
\begin{document}
\maketitle

\begin{abstract} If you color a table using $k$ colors, and throw a needle
randomly on it, for some proper definition, you get a certain probability that the endpoints will fall on 
different colors. How can one make this probability maximal?
This problem is related to finite graphs having unit-length edges, and 
some bounds on the optimal probability are deduced.\end{abstract}

\section{Introduction}

A well-known problem in geometric graph theory is the following: how many colors
are needed to color the Euclidian plane so that no two points at unit distance have the 
same color? This is known as the Hadwiger-Nelson problem. It is often stated in 
the language of graph theory: let $E^2$ be the graph whose set of vertices is $\R^2$,
and so that two vertices are joined by an edge \textit{iff} their euclidian distance
is 1. Then what we are looking for is the \textit{chromatic number} of $E^2$.

It seems that the problem was first stated by Nelson in 1950, and first published by 
Gardner in \cite{gardner}.
For a nice review about this problem and many others, see Soifer's book
\cite{soifer}. This question is still very mysterious to that day, and all that 
is known for sure is that the chromatic number of the plane is not greater than
7 and not smaller than 4. The upper bound is derived from a tesselation of the 
plane with regular hexagons proposed by Isbell \cite{soifer}. On the other hand,
the lower bound is obtained from the \textit{Moser spindle}.\cite{moser}, a graph with edges
of length 1 which cannot be colored with three colors (see Section \ref{3col} for
a definition of this graph).

The questions presented here may be seen as a probabilistic version of this 
problem, since we do not try to make \textit{every} unit-length segment 
bi-chromatic but a large fraction of them.

\medskip

Let's state our results in a rough, imprecise way: a $k$-\textit{coloring} is a Borelian 
function $c$ from, say, a square, to the finite set $\{1, \dots , k \}$, and let $p(c)$ be the probability that when throwing a unit-length
segment (or \textit{needle}) on $c$, both endpoints of the needle have the same color.
Our aim is to find a coloring $c$ such that $p(c)$ is as small as possible.

For a finite graph $g$, let $m_k(g)$ be the smallest fraction of monochromatic
edges in a $k$-coloring of $g$. Then, let $\mathcal{G}$ be the set of 
\textit{unit distance graphs} -- that is, graphs that can be embedded in the
Euclidian plane with all edges of unit-length, or equivalently, such that there is
a graph morphism from $g$ to $E^2$ -- and let $\C_k$ be the set of $k$-colorings. We
have:
\begin{theo*}
$$ \sup_{g \in \mathcal{G}} \ m_k(g) \leq \inf_{c \in \C_k} \ p(c) \leq \frac{1}{k}. $$
\end{theo*}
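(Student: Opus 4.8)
The plan is to prove the two inequalities separately, each by an averaging (first-moment) argument: for the right-hand bound I exhibit a single good coloring, while for the left-hand bound I use an arbitrary coloring to manufacture a good finite coloring of every unit distance graph. For the right inequality $\inf_{c\in\C_k} p(c)\le \tfrac1k$, it suffices to produce for each $\varepsilon>0$ a coloring with $p(c)\le \tfrac1k+\varepsilon$. I would take a random coloring: partition the domain into a fine grid of cells of side $\delta$ and assign each cell an independent uniform color in $\{1,\dots,k\}$. Since a needle has length $1$, once $\delta$ is small enough that the cell diameter is below $1$, the two endpoints of any needle almost surely lie in distinct cells, whose colors are independent; hence for every fixed needle the probability of being monochromatic is exactly $\tfrac1k$. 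Averaging over the needle and using Fubini gives $\Esp[p(c)]=\tfrac1k$, so some coloring in the support satisfies $p(c)\le\tfrac1k$. (Needles meeting grid lines or falling in boundary cells form a null set and are ignored.)

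For the left inequality I show $m_k(g)\le p(c)$ for every $g\in\mathcal{G}$ and every $c\in\C_k$; taking the supremum over $g$ and the infimum over $c$ then yields the claim. Fix a unit distance embedding of $g$ with vertex set $V$ and edge set $E$. I would apply a random rigid motion---a uniform rotation followed by a translation chosen to place the whole image inside the domain---and color each vertex $v$ by the color $c$ takes at its image point. This defines a random $k$-coloring $\gamma$ of $g$. Each edge has unit length, so under the motion it becomes a randomly placed unit needle, and the event that the edge is monochromatic for $\gamma$ is exactly the event that this needle is monochromatic. Writing $M$ for the number of monochromatic edges, linearity of expectation gives $\Esp[M/|E|]=\frac{1}{|E|}\sum_{e\in E}\Pr[e\text{ monochromatic}]=p(c)$, provided the induced law of each edge matches the needle-throwing law. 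Since the expected monochromatic fraction equals $p(c)$, some placement realizes a $k$-coloring of $g$ with at most a $p(c)$ fraction of monochromatic edges, whence $m_k(g)\le p(c)$.

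The main obstacle is exactly that last proviso: making the marginal law of a single embedded edge coincide with the needle distribution. The rotational part is clean, since a uniform angle sends any fixed edge direction to the uniform direction; the difficulty is that constraining the translation to keep the finite graph inside a bounded square distorts the positional marginal near the boundary. I would remove this defect either by working with the periodic (torus) version of the problem, where translations act freely and every edge marginal is genuinely uniform, or by a limiting argument in which the square of side $L$ grows: the graph has bounded diameter, so the measure of forbidden translations is $O(1/L)$ and the gap between $\Esp[M/|E|]$ and $p(c)$ vanishes as $L\to\infty$. Either route reduces the statement to the clean identity above and completes the chain $\sup_{g}m_k(g)\le\inf_{c}p(c)\le\tfrac1k$.
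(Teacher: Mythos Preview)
Your proposal is correct and follows essentially the same route as the paper: the upper bound via a random fine-grid coloring and Fubini, and the lower bound via a random rigid motion of an embedded unit distance graph together with linearity of expectation, which is exactly the content of the paper's Lemma~\ref{huitre} (process equivalence). You also correctly identify the one technical issue---matching the edge marginal to the needle law---and your two proposed fixes (periodic/torus and a growing-square limiting argument) are precisely how the paper handles the periodic case (Theorem~\ref{ineg}) and then passes to asymptotic colorings (Theorem~\ref{ineg2} via Lemma~\ref{choco}).
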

We do not know whether the left hand-side of the inequality is actually an equality in
the general case or not, but we prove that it is so for $k=2$.

We also prove the following consequence, regarding the Hadwiger-Nelson problem:
\begin{cons} \label{con}
If \ $\inf_{c \in \Ca_k} \ p(c) = 0$ \ then the plane can be $k$-colored.
\end{cons}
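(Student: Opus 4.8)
The plan is to use the left-hand inequality of the Theorem to turn the probabilistic hypothesis into a purely combinatorial statement about finite unit distance graphs, and then to pass from finite subgraphs to the whole plane by a compactness argument.

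First I would invoke the Theorem, which gives
$$ \sup_{g \in \mathcal{G}} m_k(g) \;\leq\; \inf_{c \in \Ca_k} p(c) \;=\; 0. $$
Since $m_k(g) \geq 0$ for every $g$, this forces $m_k(g) = 0$ for all $g \in \mathcal{G}$. For a \emph{finite} graph $g$ there are only finitely many $k$-colorings, so the infimum defining $m_k(g)$ is attained; hence $m_k(g) = 0$ means that some $k$-coloring of $g$ has no monochromatic edge at all, i.e.\ $g$ admits a proper $k$-coloring. Thus every finite unit distance graph has chromatic number at most $k$.

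Next I would note that every finite subgraph of $E^2$ is itself a unit distance graph: its vertices are points of $\R^2$ and its edges join points at distance $1$, so it lies in $\mathcal{G}$ via the inclusion morphism into $E^2$. By the previous step each such finite subgraph is properly $k$-colorable. The De Bruijn--Erd\H{o}s theorem then asserts that an infinite graph is $k$-colorable as soon as all of its finite subgraphs are; applying it to $E^2$ yields a proper $k$-coloring of $E^2$, which is exactly a $k$-coloring of the plane in which no two points at unit distance share a color.

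The routine part is the first deduction; the essential ingredient is the De Bruijn--Erd\H{o}s theorem, whose proof rests on a compactness (equivalently, axiom of choice) argument, for instance Tychonoff's theorem applied to the product space $\{1,\dots,k\}^{\R^2}$ of all colorings of the plane. The two points I would check carefully are that $m_k(g)=0$ on a finite graph genuinely produces a proper coloring rather than merely an approximate one, and that the class $\mathcal{G}$ really does contain all finite subgraphs of $E^2$, so that the hypothesis applies to each of them.
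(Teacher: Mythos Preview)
Your argument is correct and is essentially the paper's own proof, stated directly rather than as a contrapositive: both use the left-hand inequality of the Theorem together with the De Bruijn--Erd\H{o}s theorem. You supply more detail than the paper (that $m_k(g)=0$ on a finite graph yields an honest proper coloring, and that every finite subgraph of $E^2$ lies in $\mathcal{G}$), but the route is the same.
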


The paper is organized as follows: in Section~\ref{defsection} we clarify the
definitions and tools we will use. Section~\ref{equiv} is devoted to the proof 
of the theorem in the case of periodic colorings of the plane; a generalization
to what we call ``asymptotic colorings'' is rejected to Section~\ref{ext}.
In Section~\ref{appli} we give some applications, studying the particular cases $k=2,3$,
and we prove the corollary.
We study in Section~\ref{ext} the case of a finite table instead of the whole plane, 
and give some remarks on possible 
generalizations to higher dimensions.

\section{Definitions} \label{defsection}
We will be interested in \textit{periodic} and \textit{asymptotic} colorings,
the latter being a generalization of the former.
\begin{definition}
A $k$-\textit{coloring} of the plane is a measurable function $c$ from the Euclidean 
plane~$\R ^2$ to the finite set $\{1, \dots , k \}$.

A \textit{coloring} of the plane is simply is a $k$-coloring for some $k \geq 2$.
\end{definition}

\begin{definition}
\
\begin{enumerate}[i)] \label{pper}
\item A coloring $c$ is said to be \textit{periodic} if there are two free
vectors $u,v\in \R^2$ so that for any $x$ in $\R ^2$, $c(x+u)=c(x+v)=c(x)$. In such a
case let $\P_c$ be the parallelogram formed on $u$ and $v$, \textit{i.e.}:
\[\P_c = \{tu+sv, \ 0\leq t,s< 1 \} \subset \R ^2. \]
\item For a periodic coloring $c$, consider the following random variables:
$A$ is a random point chosen uniformly in $\P_c$, 
$\theta$ is a random angle taken uniformly in $[0;2 \pi[$, independent of $A$, 
and $B=A + e^{i \theta}$.
Then let $\pper(c)$ denote the probability
$$\pper(c) = \Pr(c(A)=c(B)).$$
\end{enumerate}
\end{definition}

\begin{definition}
\label{defas}For a general coloring $c$, and $R>1$, consider the 
following random variables:
$A_R$ is a random point chosen uniformly in $[-R;R]^2$, 
$\theta$ is a random angle taken uniformly in $[0;2 \pi[$, independent of $A_R$, 
and $B_R=A_R + e^{i \theta}$.
Then let $\ptable_R(c)$ denote the conditional probability
$$\ptable_R(c) = \Pr \left( c(A_R)=c(B_R) \ \mid \ B_R \in [-R;R]^2 \right),$$
which is well defined because $R>1$.

The coloring $c$ is said to be \textit{asymptotic} if $\ptable_R(c)$
converges to a limit as $R$ tends to infinity. This limit is called $p(c)$.
\end{definition}

It should be clear that a periodic coloring $c$ is also an asymptotic coloring,
and $\pper(c)=p(c)$ in that case. However, $p(c)$ is not
in general a probability and should not be seen as such; we may call it an 
asymptotic probability.
\smallskip

Let's turn to the definitions regarding finite graphs, that describe how a graph
can be ``well $k$-colored'' even when its chromatic number is greater
than $k$.
\begin{definition}
\ 
\begin{enumerate}[i)]
\item For a finite graph $G=(V,E)$ and a $k$-coloring $c$ of the vertices of $G$, 
let $M_G(c)$ be the number of monochromatic edges in $G$ when colored with $c$, \textit{i.e.}
$$M_G(c) = \sum_{(i, j) \in E} \mathds{1}_{c(i)=c(j)}.$$
where the function $\mathds{1}_{c(i)=c(j)}$ returns 1 if $c(i)=c(j)$ and 0 otherwise. Here the sum is taken on the edges of $G$, meaning that every edge appears exactly one time in the sum.

Then let $m_k(G)$ be
\[m_k(G) = \min_{c \ \mathrm{is \ a} \ k-\mathrm{coloring \ of} \ G} \frac{M_G(c)}{|E|}.\]
\item A finite graph is said to be a \emph{unit-distance graph} if it has 
an embedding in $\R^2$ in which all edges have unit-length, that we will call an \textit{embedding in} $E^2$.
\end{enumerate}
\end{definition}
Now the main result of this paper can be properly stated:
\begin{theo} \label{ineg}
Let $\mathcal{G}$ be the set of unit distance graphs and $\Cp_k$ the set 
of periodic $k$-colorings of the plane, then
$$ \sup_{g \in \mathcal{G}} \ m_k(g) \leq \inf_{c \in \Cp_k} \ \pper(c) \leq \frac{1}{k}. $$
\end{theo}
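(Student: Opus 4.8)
The plan is to prove the two inequalities separately, both by the probabilistic ``averaging'' method: in each case I exhibit a suitable random object, compute the expectation of the relevant quantity by linearity, and conclude that some deterministic realization does at least as well as the average.

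For the left inequality $\sup_{g} m_k(g) \le \inf_c \pper(c)$, it suffices to show that $m_k(g) \le \pper(c)$ for every unit-distance graph $g=(V,E)$ and every periodic $k$-coloring $c$. Fix an embedding $\phi : V \to \R^2$ realizing all edges of $g$ with unit length, and let $\Lambda$ be the lattice of periods of $c$, so that $\P_c$ is a fundamental domain and $c$ factors through $\R^2/\Lambda$. I would draw a random rigid motion $T$ of the plane --- a rotation by an angle $\theta$ uniform in $[0;2\pi[$ followed by a translation $\tau$ uniform in $\P_c$ and independent of $\theta$ --- and define the random $k$-coloring $c_T$ of $g$ by $c_T(i) = c(T\phi(i))$. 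The key computation is that each edge is monochromatic with probability exactly $\pper(c)$: for an edge $(i,j)$, the image point $A := T\phi(i)$ is uniform on $\R^2/\Lambda$ (because $\tau$ is, for every fixed $\theta$), hence independent of $\theta$; the other endpoint is $A + e^{i\psi}$ with $\psi = \theta + \arg(\phi(j)-\phi(i))$ uniform in $[0;2\pi[$ and independent of $A$, since $\phi(j)-\phi(i)$ has unit length. Thus $(A,\psi)$ has exactly the law used to define $\pper(c)$, so $\Pr(c_T(i)=c_T(j)) = \pper(c)$. By linearity of expectation $\Esp[M_g(c_T)] = |E|\,\pper(c)$, so some realization $c_T$ has $M_g(c_T)/|E| \le \pper(c)$; as $c_T$ is a $k$-coloring of $g$, this gives $m_k(g) \le \pper(c)$, and taking the supremum over $g$ and the infimum over $c$ finishes this inequality.

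For the right inequality $\inf_c \pper(c) \le 1/k$, I would construct an explicit family of random periodic colorings and average. Partition a large fundamental square into small axis-parallel cells of side $\delta < 1/\sqrt 2$, assign to each cell an independent uniform color in $\{1,\dots,k\}$, and extend the coloring periodically. Because $\delta < 1/\sqrt 2$, two points at distance $1$ never share a cell, and if the period is chosen large enough relative to $1$ the two cells are never identified by periodicity either; hence the colors of the two endpoints of any fixed unit needle are independent and uniform, and match with conditional probability exactly $1/k$. Fubini then gives $\Esp_c[\pper(c)] = 1/k$, so at least one coloring in $\Cp_k$ satisfies $\pper(c) \le 1/k$, whence $\inf_c \pper(c) \le 1/k$.

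The routine parts --- existence of the unit-length embedding, measurability, and the Fubini exchanges --- are not the difficulty. The main obstacle, and the crux of the argument, is the exact distributional identity in the lower bound: checking that under the random rigid motion the pair $(A,\psi)$ reproduces the law of $(A,\theta)$ from the definition of $\pper(c)$, that is, that the basepoint stays uniform on $\R^2/\Lambda$ and independent of the uniform edge direction. This is precisely what upgrades a mere inequality into the clean per-edge equality $\Pr(c_T(i)=c_T(j))=\pper(c)$ on which the whole averaging argument rests.
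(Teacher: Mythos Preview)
Your proposal is correct and follows essentially the same route as the paper. The paper isolates your ``key computation'' as a separate lemma (Lemma~\ref{huitre}), phrased as an equivalence between throwing a needle directly and throwing a randomly rotated/translated copy of $g$ and then selecting an edge; the change of variables by periodicity that you invoke to make $A$ uniform on $\R^2/\Lambda$ independently of $\theta$ is exactly the step the paper marks as $(*)$, and your random-coloring argument for the upper bound matches the paper's almost verbatim.
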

We will then deduce an analogous theorem for asymptotic 
colorings:
\begin{theo} \label{ineg2}
Let $\mathcal{G}$ be the set of unit distance graphs and $\C_k$ the set 
of asymptotic $k$-colorings of the plane, then
$$ \sup_{g \in \mathcal{G}} \ m_k(g) \leq \inf_{c \in \C_k} \ p(c) \leq \frac{1}{k}. $$
\end{theo}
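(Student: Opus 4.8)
The plan is to deduce Theorem~\ref{ineg2} from Theorem~\ref{ineg} by showing that passing from periodic to asymptotic colorings leaves the relevant infimum unchanged, namely that
\[ \inf_{c \in \Cp_k} \pper(c) = \inf_{c \in \C_k} p(c). \]
One inequality is free: as noted just after Definition~\ref{defas}, every periodic coloring is asymptotic with $p(c)=\pper(c)$, so $\Cp_k \subseteq \C_k$ and the infimum over the larger family is at most the infimum over the smaller one, giving $\inf_{c\in\C_k} p(c) \le \inf_{c\in\Cp_k}\pper(c)$. Granting the reverse inequality, Theorem~\ref{ineg2} follows at once: the left-hand inequality of Theorem~\ref{ineg} reads $\sup_{g} m_k(g) \le \inf_{c\in\Cp_k}\pper(c)$, which by the claimed equality is $\inf_{c\in\C_k}p(c)$, while the right-hand inequality gives $\inf_{c\in\C_k}p(c) = \inf_{c\in\Cp_k}\pper(c) \le 1/k$.

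The substance is therefore the reverse inequality $\inf_{c'\in\Cp_k}\pper(c') \le p(c)$ for an arbitrary asymptotic coloring $c$, which I would obtain by \emph{periodization}. Fix such a $c$ and a parameter $R>1$, and let $c_R$ be the periodic $k$-coloring whose fundamental domain $\P_{c_R}=[-R;R]^2$ carries the restriction of $c$, extended to $\R^2$ through the lattice generated by $(2R,0)$ and $(0,2R)$. Being the periodization of a measurable function, $c_R$ is a genuine periodic coloring, so $c_R\in\Cp_k$. The point is to compare $\pper(c_R)$ with $\ptable_R(c)$. Taking $A$ uniform in $\P_{c_R}=[-R;R]^2$, $\theta$ uniform and $B=A+e^{i\theta}$ as in the definition of $\pper$, I would split on the event $\{B\in[-R;R]^2\}$: on this event both sample points lie in the same tile, so $c_R(A)=c(A)$ and $c_R(B)=c(B)$ and $c_R$ agrees with $c$, whence this contribution is exactly $\Pr(c(A)=c(B),\,B\in[-R;R]^2)=\ptable_R(c)\,\Pr(B\in[-R;R]^2)$, while the complementary event contributes at most its own probability.

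The main (and essentially only) obstacle is the seam estimate: one must check that a unit needle dropped in $[-R;R]^2$ leaves the square only with probability $O(1/R)$. This is a boundary-layer computation — the unfavourable basepoints $A$ are those within distance $1$ of $\partial[-R;R]^2$, an area $O(R)$ inside a square of area $4R^2$ — so $\Pr(B\notin[-R;R]^2)=O(1/R)$. Combined with the splitting above this yields $\pper(c_R)=\ptable_R(c)\,(1-O(1/R))+O(1/R)$, and since $\ptable_R(c)\to p(c)$ as $R\to\infty$ with all quantities bounded in $[0;1]$, we get $\pper(c_R)\to p(c)$. Hence $\inf_{c'\in\Cp_k}\pper(c')\le\liminf_{R\to\infty}\pper(c_R)=p(c)$, which is the desired reverse inequality and completes the deduction.
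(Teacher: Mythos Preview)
Your proof is correct and follows essentially the same route as the paper: restrict $c$ to $[-R;R]^2$, periodize, and compare $\pper$ of the periodization with $\ptable_R(c)$ via the $O(1/R)$ boundary-layer (seam) estimate, then pass to the limit. The paper packages the seam estimate as a separate lemma (Lemma~\ref{choco}) and applies Theorem~\ref{ineg} directly to the periodizations rather than first establishing the equality of the two infima, but the content of the argument is the same.
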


\section{Process equivalence}
\label{equiv}

Let $c$ be a periodic coloring of the plane, and consider the following needle-throwing ``processes''. The first is only a rephrasement of Definition~\ref{pper}, given here for clarity purposes.
\begin{process} \label{premier}
Consider the following random variables:
\begin{itemize}
  \item a random point $A$ chosen uniformly 
  in $\P_c$ ;
  \item an independent random angle $\theta$ chosen uniformly in $[0;2 \pi[$, and define $B$ as $A + e^{i \theta}$.
\end{itemize}
\end{process}

Note that $B$ may fall outside of $\P_c$. In that case, $c(B)$ is 
defined using the periodicity of the coloring.
Our goal is to evaluate and minimize the probability that both ends have the 
same color.

Now consider the second process that will be very useful in our 
proofs. The idea is to throw a unit distance graph on the 
plane, and then to choose a needle on that graph:
\begin{process}
Given a unit distance graph $G = (V,E)$, embedded in $E^2$. Label its edges with numbers from 
$1$ to $m$. The complex coordinates of the vertices of the $j^{th}$ edges are 
named $z_j $ and $z_j + e^{i.\theta_j}$. Then let: 
\begin{itemize}
\item $A_0$ be a random point taken uniformly in $\P_c$ ;
\item $\theta$ be an independent angle taken uniformly in $[0;2\pi[$ (to rotate the graph) ;
\item $J$ be an integer in $[| 1;m|]$, independent from all of the above.
\end{itemize}
Then, rotate the graph $G$ by the angle $\theta$, translate it so that the origin 
of the plane falls on $A_0$, and take the needle $(A',B')$ corresponding to the $J^{th}$ 
edge of the obtained graph. In other words, the endpoints of the needle have the law:
$$A' = e^{i.\theta} z_J + A_0, \ B' = e^{i.\theta} (z_J + e^{i.\theta_J}) + A_0.$$
\end{process}

\begin{lemma}\label{huitre}
Let $c_1$ and $c_2$ denote 
two colors. Then, we 
have~:\\
 $$\mathbb{P}(c(A) = c_1, c(B) = c_2) = \mathbb{P}(c(A') = c_1, c(B') = c_2) $$ \\
 In particular, this probability does not depend on the graph 
 chosen.
\end{lemma}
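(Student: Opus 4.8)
The plan is to show that both sides equal one and the same quantity: the average, over a uniformly chosen direction, of a fixed two–point correlation of the coloring on the fundamental domain. The whole point is that the extra data appearing in the second process — the position $z_J$ of the chosen edge and its intrinsic angle $\theta_J$ — get washed out, the first by the periodicity of $c$ and the second by the uniform rotation $\theta$.

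First I would introduce, for a fixed direction $\phi \in [0;2\pi[$, the correlation
$$f(\phi) = \Pr\big(c(A_0) = c_1,\ c(A_0 + e^{i\phi}) = c_2\big),$$
where $A_0$ is taken uniformly in $\P_c$. The key lemma (the main obstacle, discussed below) is a translation–invariance statement: for every fixed vector $w \in \R^2$,
$$\Pr\big(c(A_0 + w) = c_1,\ c(A_0 + w + e^{i\phi}) = c_2\big) = f(\phi).$$
Granting this, the computation is short. Conditioning the second process on $J = j$ and $\theta = \vartheta$, one has $A' = A_0 + e^{i\vartheta} z_j$ and $B' = A' + e^{i(\vartheta + \theta_j)}$ (note $|B' - A'| = 1$, so the needle really has unit length). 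Applying the lemma with $w = e^{i\vartheta} z_j$ and $\phi = \vartheta + \theta_j$ gives
$$\Pr\big(c(A') = c_1,\ c(B') = c_2 \ \big|\ J = j,\ \theta = \vartheta\big) = f(\vartheta + \theta_j).$$
Averaging over $\vartheta$ uniform in $[0;2\pi[$ and using that $f$ is $2\pi$–periodic, the change of variables $\psi = \vartheta + \theta_j$ eliminates the dependence on $\theta_j$; averaging then over $J$ (whose contributions are now identical) eliminates the dependence on $j$ and on $m$. Hence
$$\Pr\big(c(A') = c_1,\ c(B') = c_2\big) = \frac{1}{2\pi} \int_0^{2\pi} f(\psi)\, d\psi.$$
The same conditioning on $\theta$ in the first process yields exactly $\frac{1}{2\pi}\int_0^{2\pi} f(\psi)\,d\psi$ as well, so the two sides coincide; since the final expression involves only $f$, it is manifestly independent of the graph $G$.

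The main obstacle is the translation–invariance lemma, which I would prove by passing to the torus $\R^2/L$, where $L$ is the lattice spanned by $u$ and $v$. Since $c$ is $L$–periodic, both $x \mapsto c(x)$ and $x \mapsto c(x + e^{i\phi})$ descend to measurable functions on $\R^2/L$, and the law of $A_0$ uniform in $\P_c$ is exactly the normalized Haar measure on this torus. The shift $x \mapsto x + w$ is a measure–preserving bijection of $\R^2/L$, so the integral of the indicator of $\{c(A_0 + w) = c_1,\ c(A_0 + w + e^{i\phi}) = c_2\}$ against the uniform measure equals the same integral with $w$ removed. The one subtlety to handle with care is that $A_0 + w$ and $A_0 + w + e^{i\phi}$ may leave $\P_c$, so the values of $c$ there must be read off through its periodic extension; once everything is phrased on $\R^2/L$ this becomes automatic. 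The remaining ingredients — Fubini to justify the conditioning and the periodic change of variables — are routine.
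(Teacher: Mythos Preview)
Your proof is correct and follows essentially the same route as the paper's: condition on $J=j$ and $\theta$, use periodicity to remove the translation $e^{i\theta}z_j$ (the paper's equality $(*)$ is exactly your translation--invariance lemma), then shift $\theta$ by $\theta_j$ to collapse the $j$--dependence. Your torus/Haar formulation of the translation step is a bit more explicit than the paper's one--line ``by periodicity,'' but the argument is the same.
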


\begin{proof}
We call $\mathcal{A}(\P_c)$ the area of $\P_c$.

We prove the lemma by the following series of equalities:
\begin{eqnarray*}
& & \mathbb{P}(c(A') = c_1, c(B') = c_2) \\
  &=& \frac{1}{m}\sum_{j=1}^{m} \mathbb{P}(c(A') = c_1, c(B') = c_2 | J=j)  \\
  &=& \frac{1}{m}\sum_{j=1}^{m}  \frac{1}{2\pi}\int_{\theta =0} ^{2\pi} \frac{1}{\mathcal{A}(\P_c)}\int_{A_0 \in \P_c} \mathbb{P}(c(A') = c_1, c(B') = c_2 | J=j, \theta, A_0) d\theta dx dy \\  
  &=& \frac{1}{m 2\pi \mathcal{A}(\P_c)}\sum_{j=1}^{m}\int_{\theta =0} ^{2\pi} \int_{A_0 \in \P_c} \mathds{1}(c(A_0 + z_i e^{i\theta}) = c_1, c(A_0 +z_j e^{i\theta} + e^{i (\theta + \theta_j)}) = c_2) d\theta dx dy \\  
    &=& \frac{1}{m 2\pi \mathcal{A}(\P_c)}\sum_{j=1}^{m} \int_{\theta =0} ^{2\pi} \int_{A_0 \in \P_c} \mathds{1}(c(A_0) = c_1, c(A_0 + e^{i (\theta + \theta_j)}) = c_2 ) d\theta dx dy \hspace{1 cm} (*)\\ 
    &=& \frac{1}{m 2\pi \mathcal{A}(\P_c)}\sum_{j=1}^{m} \int_{\theta =0} ^{2\pi} \int_{A_0 \in \P_c} \mathds{1}(c(A_0) = c_1, c(A_0 + e^{i \theta}) = c_2 ) d\theta dx dy \\ 
    &=& \frac{1}{2\pi \mathcal{A}(\P_c)}\int_{\theta =0} ^{2\pi} \int_{A_0 \in \P_c} \mathds{1}(c(A_0) = c_1, c(A_0 + e^{i \theta}) = c_2 ) d\theta dx dy \\ 
    &=& \mathbb{P}(c(A) = c_1, c(B) = c_2)
\end{eqnarray*}

Equality $(*)$ is justified by the periodicity of our coloring.
\end{proof}

\begin{proof}[Proof of Theorem~\ref{ineg}]
Consider a graph $g$ colored with $k$ colors. When one chooses a needle
randomly on this graph the probability that both ends have the same
colors is clearly greater than $m_k(g)$. Applying lemma \ref{huitre} with $g$, 
it follows from the description of the second process that the probability 
$\pper(c)$ for any coloring $c$ is greater than $m_k(g)$.

Now to get the right-hand side of the theorem, let $\P$ be a square of side $R$,
and cut $\P$ into $n^2$ squares of side $R/n$. Let $C$ be a \textit{random} 
coloring obtained by assigning to these $n^2$ smaller squares i.i.d. colors
taken uniformly in $\{1,\dots,k\}$, and by repeating $\P$ to get a periodic 
coloring. Then one easily checks that for $R$ large enough and $R/n$ small
enough, for any needle $(A,B)$, $A$ and $B$ are in distinct small squares so
$$\Pr(C(A)=C(B)) = \frac1k,$$
and an application of Fubini's Theorem shows that
$$\Esp[\pper(C)] = \frac1k,$$
so that there is a realization $c_0$ with $\pper(c_0) \leq \frac1k$.
\end{proof}

\section{Applications} \label{appli}
\subsection{Connections with the Hadwiger-Nelson problem} \label{hn}
The results shown in this Section will be derived using the axiom of choice. 
This is important to notice, since the answer to the Hadwiger-Nelson problem is 
suspected to depend on the set of axioms used; see \cite{choice}, \cite{choice2}. We will use the 
De Bruijn - Erd\H{o}s theorem, established in \cite{erdos}, whose proof uses 
the axiom of choice.
\begin{theo}[De Bruijn - Erd\H{o}s]
 A graph $G$ can be colored with $k$ colors iff all of its finite subgraphs 
 can be colored with $k$ colors.
\end{theo}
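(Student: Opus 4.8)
The plan is to prove the nontrivial implication by a compactness argument; the reverse direction is immediate, since any proper $k$-coloring of $G$ restricts to a proper $k$-coloring of each of its finite subgraphs. So I would assume that every finite subgraph of $G=(V,E)$ admits a proper $k$-coloring, and construct one for $G$ itself.

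First I would topologize the set of all colorings. Endow the color set $\{1,\dots,k\}$ with the discrete topology and form the product space $X = \{1,\dots,k\}^V$ with the product topology. Since $\{1,\dots,k\}$ is finite, hence compact, Tychonoff's theorem makes $X$ compact; this is precisely the step that invokes the axiom of choice, as announced in the surrounding text. A point of $X$ is exactly a function assigning a color to every vertex. For each edge $e=(u,v)\in E$, I would introduce
\[ X_e = \{\, c \in X \ :\ c(u) \neq c(v) \,\}. \]
Each $X_e$ depends on only two coordinates, so it is clopen in $X$, and in particular closed. A proper $k$-coloring of $G$ is by definition an element of $\bigcap_{e\in E} X_e$, so it suffices to prove that this intersection is nonempty.

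The key step is to establish the finite intersection property of the family $\{X_e\}_{e\in E}$. Given finitely many edges $e_1,\dots,e_n$, let $H$ be the finite subgraph of $G$ spanned by their endpoints. By hypothesis $H$ has a proper $k$-coloring; extending it to all of $V$ by assigning arbitrary colors to the remaining vertices produces a point of $X$ lying in $X_{e_1}\cap\dots\cap X_{e_n}$, which is therefore nonempty. Since $X$ is compact and the $X_e$ are closed with the finite intersection property, their total intersection $\bigcap_{e\in E} X_e$ is nonempty, and any element of it is a proper $k$-coloring of $G$.

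The part I expect to require the most care is this finite-intersection verification: one must remember to extend the coloring of the finite subgraph to \emph{all} of $V$ so that it genuinely lands in the ambient product $X$, and to check that the sets $X_e$ are closed (they are in fact clopen, being determined by two coordinates) so that compactness applies. The conceptual weight of the argument rests entirely on the non-constructive passage from finite satisfiability to global satisfiability, which is exactly why the axiom of choice enters; an equivalent route would be to encode $k$-colorability as a family of propositional constraints and invoke the compactness theorem of propositional logic instead of Tychonoff's theorem.
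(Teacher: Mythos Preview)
Your proof is correct: the Tychonoff compactness argument you give is one of the standard proofs of the De~Bruijn--Erd\H{o}s theorem, and every step is sound, including the verification that the sets $X_e$ are clopen and that the finite intersection property holds by extending a coloring of the finite subgraph arbitrarily to all of $V$.

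However, the paper does not actually prove this theorem at all. It is stated as a background result and attributed to the original reference \cite{erdos}; the paper merely remarks that its proof uses the axiom of choice and then applies the theorem as a black box in the proof of Corollary~\ref{con}. So there is no ``paper's own proof'' to compare against---you have supplied a complete argument where the paper simply invokes the literature.
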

In other words, the chromatic number of a graph is the maximum chromatic number 
of its finite subgraphs.

\begin{proof}[Proof of corollary~\ref{con}]If there exist a unit distance graph (finite by definition) which cannot 
be colored with $k$ colors, it 
follows from our study that the probability $p(c)$ for $c$ a valid 
$k$-coloring is always greater than a certain positive constant. Taking the 
contrapositive of this statement and using the De Bruijn-Erd\H{o}s theorem yield 
the corollary.
\end{proof}

\subsection{2 colors} \label{2col}

With two colors, we consider an equilateral triangle as a unit distance graph. 
As there are always at least two vertices with the same color, it's clear that 
$m_2(g) = \frac{1}{3}$. Thanks to Theorem~\ref{ineg}, 
we know that $\pper(c) \geq \frac13$ for any periodic coloring $c$.

This bound is optimal. Indeed, consider the coloring in Figure~\ref{couleur}, 
constructed with parallel strips of width 
$l = \frac {\sqrt3}{2}$, with the upper side opened and the lower side closed. 

\begin{figure}[h]
\center
\includegraphics[scale=0.5]{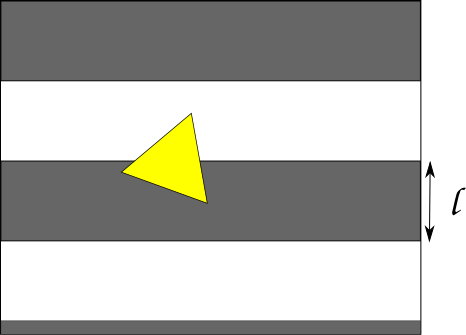}
\caption{\label{couleur} The parallel stripes $2$-coloring}
\end{figure}

For this coloring, the probability of getting the same color using the first 
process is $\frac13$. This can be shown by direct computation, or we can 
more simply remark that no unit-length equilateral triangle can have its 
three vertices with the same colors. Applying Lemma~\ref{huitre}, we conclude 
that this coloring achieves a probability of $\frac{1}{3}$ indeed.

\subsection{3 colors}\label{3col}
 With three colors, we use the Moser spindle of Figure~\ref{color} - a unit 
 distance graph with $m_3(g) = \frac{1}{11}$.
 We similarly get that for $k=3$, $p(c) \geq \frac{1}{11}$ for any valid coloring. 

\begin{figure}[h]
\center
\includegraphics[scale=0.4]{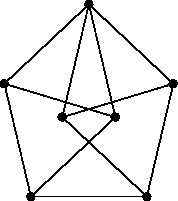}
\caption{\label{color} The Moser spindle}
\end{figure}

We do not know if the previous bound is optimal. We believe that Figure~\ref{trois} 
can give a rather good $3$-coloring of the plane, for some well chosen length 
of the edge of the hexagons. Rough simulations and optimization have shown that 
this coloring gives a $\pper(c)$ of about $0.13$ if the edge-length of the hexagons is about 
$0.61$, but $0.13$ is still greater than $\frac{1}{11} \simeq 0.091$.

This also tells us that a unit distance graph of chromatic number greater than 3
has at least $8$ edges, because otherwise Theorem~\ref{ineg} would imply that
all $3$-colorings of the plane satisfy $p(c) \geq \frac17$, and this is not the
case in our simulation. Thus, simulations can be used to provide lower bounds on the
number of edges of a non-$k$-colorable unit distance, for any $k$.

\begin{figure}[h]
\center
\includegraphics[scale=0.5]{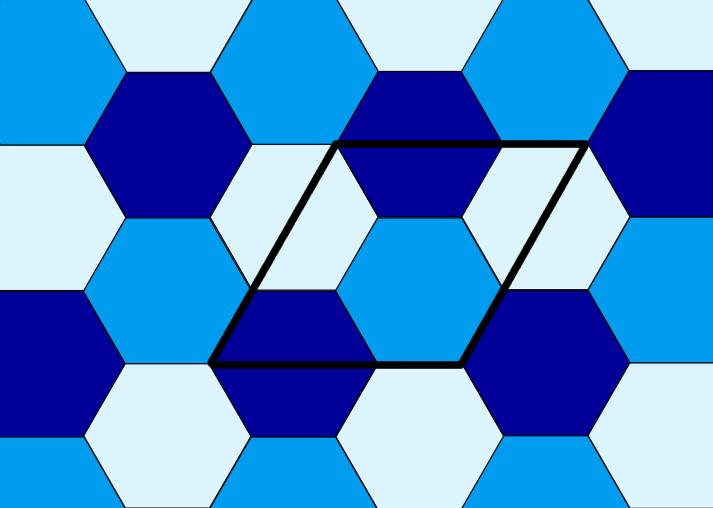}
\caption{\label{trois} A hexagonal $3$-coloring and its parallelogram of periodicity}
\end{figure}

\section{Extensions}
\label{ext}
\subsection{Finite table}
\label{fini}

We previously used periodic border conditions so that we didn't have to worry about the second 
endpoint $B$ falling outside of $\mathbf{P}$, which made things technically easier. We now 
try to require that $B$ fall in $\mathbf{P}$ to match a somewhat more practical 
view: we want to throw the needle on an ``actual'' bounded 
table. The first difficulty is to find a natural distribution for the needle.

\begin{process} \label{encore}
Denote by $\mathbf{P}$ an open parallelogram, representing the table. For the 
process to be well defined, we need to assume that $\mathbf{P}$ can contain at
least one needle.
\begin{itemize}
\item Let $(A,B)$ be a random needle on $\P$ as in Process~\ref{premier}. Let $(A'',B'')$ be random points in $\P$ following the law
of $(A,B)$ conditioned by the event $\{B \in \mathbf{P} \}$.
\end{itemize}
\end{process}

\begin{definition}
For $r>0$, let $K(\mathbf{P},r)$ the $r$-wide inner border of $\mathbf{P}$~:
\[K(\mathbf{P},r) = \{ z \in \mathbf{P} \mid d(z,\mathbf{P}^c) < r\} \]
where $d(z,\mathbf{P}^c)$ is the usual Euclidian distance from the point $z$ to the set $\mathbf{P}^c$.
\end{definition}

\begin{figure}[h]
\center
\includegraphics[scale=0.5]{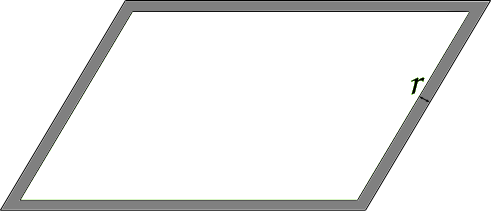}
\caption{\label{tablefinie} The set $K(\mathbf{P},r)$ in gray}
\end{figure}

\begin{lemma}\label{choco}
Let $c$ be a $k$-coloring of $\mathbf{P}$. Let
$\ptable(c) = \Pr(c(A'')=c(B''))$ be the probability given by Process~\ref{encore}. Then 
there is a positive universal constant $\kappa \leq 2$ so that 
$$ | \pper(\tilde{c}) - \ptable(c)| \leq \kappa \frac{\mathcal{A}(K(\mathbf{P},1))}{\mathcal{A}(\mathbf{P})}.$$
\end{lemma}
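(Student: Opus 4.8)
The plan is to realise both quantities on the \emph{same} underlying needle $(A,B)$ of Process~\ref{premier} — with $A$ uniform in $\mathbf{P}$ and $B = A + e^{i\theta}$ — and to regard $\ptable(c)$ as $\pper(\tilde{c})$ conditioned on the needle not leaving the table. Here $\tilde{c}$ is the periodic extension of $c$ obtained by tiling the plane with translates of $\mathbf{P}$, so that $\tilde{c}(A) = c(A)$ always, while $\tilde{c}(B) = c(B)$ whenever $B \in \mathbf{P}$. Writing $E = \{\tilde{c}(A) = \tilde{c}(B)\}$, Process~\ref{encore} is exactly $(A,B)$ conditioned on $\{B \in \mathbf{P}\}$, and hence
$$\pper(\tilde{c}) = \Pr(E), \qquad \ptable(c) = \Pr\!\left(E \mid B \in \mathbf{P}\right).$$
The whole discrepancy therefore comes from the small fraction of needles whose far endpoint leaves the table.

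First I would control that fraction geometrically. Let $q = \Pr(B \notin \mathbf{P})$. Since $|A - B| = 1$, the event $\{B \notin \mathbf{P}\}$ forces $d(A, \mathbf{P}^c) \le 1$, i.e. $A$ lies in the closure of the $1$-wide inner border; as the level set $\{d(\cdot,\mathbf{P}^c) = 1\}$ is a null set, this yields
$$q \;\le\; \frac{\mathcal{A}(K(\mathbf{P},1))}{\mathcal{A}(\mathbf{P})} \;=:\; \rho.$$

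Then the conclusion is pure algebra. Setting $a = \Pr(E,\, B \in \mathbf{P})$, $b = \Pr(E,\, B \notin \mathbf{P})$ and $p = \Pr(B \in \mathbf{P}) = 1 - q$, one has $\pper(\tilde{c}) = a + b$ and $\ptable(c) = a/p$, whence
$$\pper(\tilde{c}) - \ptable(c) = b - \frac{a\,q}{p}.$$
Both $b$ and $aq/p$ lie in $[0,q]$: indeed $b \le q$ trivially, and $a \le p$ makes $aq/p \le q$, which also shows the expression stays harmless even when $p$ is small. A crude triangle inequality then bounds the difference by $b + aq/p \le 2q \le 2\rho$, giving the claim with $\kappa = 2$; observing instead that the difference of two numbers in $[0,q]$ has modulus at most $q$ even sharpens this to $\kappa = 1$.

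I would expect the only genuine subtlety to be the geometric step: one must check that leaving $\mathbf{P}$ really confines $A$ to $K(\mathbf{P},1)$ up to a null set, and that $\Pr(B \in \mathbf{P}) > 0$ so that the conditioning — and $\ptable(c)$ itself — is well defined, which is guaranteed by the standing assumption that $\mathbf{P}$ contains a needle. The algebraic part is routine.
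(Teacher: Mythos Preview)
Your proof is correct and follows the same strategy as the paper: couple both processes on the same needle $(A,B)$, bound $\Pr(B\notin\mathbf{P})$ by $\rho$ via the inclusion $\{B\notin\mathbf{P}\}\subset\{A\in K(\mathbf{P},1)\}$ (up to a null set), and finish with elementary algebra. Your bookkeeping via $b - aq/p$ is in fact tidier than the paper's, which writes the difference as $\frac{\Pr(N^c)}{\Pr(N)}\bigl(\Pr(M\mid N^c)-\Pr(M)\bigr)$ and then needs a case split on whether $\rho\ge\tfrac12$ to land at $\kappa=2$; your observation that $b$ and $aq/p$ both lie in $[0,q]$ bypasses the case split and even yields $\kappa=1$.
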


\begin{proof}
We will show that the inequality is true for $\kappa =2$, but this is probably 
not the best constant.

When $\mathcal{A}(K(\mathbf{P},1)) = \mathcal{A}(\mathbf{P})$ the result is clear,
so we may suppose that these areas differ. This implies that there are needles 
inside $\P$, so all conditional probabilities will be well defined.

Let $r = \frac{\mathcal{A}(K(\mathbf{P},1))}{\mathcal{A}(\mathbf{P})}$. When 
$r \geq \frac12$ the inequality is obvious, so we will now suppose that 
$r < \frac12$. Let $\tilde{c}$ be the periodic version of
$c$ given by repeating $\P$. Consider $(A,B)$ a needle whose law is given by Process~\ref{premier} on $\P$ and let $M$and $N$ be the following events:
$$ M = \{\tilde{c} (A) = \tilde{c} (B) \}, \ N = \{B\notin \P \}.$$
Then $P(N^c) \leq r$, indeed, $B$ can 
be outside of $\mathbf{P}$ only
when $A$ is in $K(\mathbf{P},1)$. Easy computation then shows that
\begin{eqnarray*}
\pper(\tilde{c}) - \ptable(c) & = & P(M) - P(M \mid N) \\
& = & \frac{P(N^c)}{P(N)} \left( P(M\mid N^c) - P(M)\right),
\end{eqnarray*}
and using the fact that $P(N^c) \leq r$ and 
$P(N) \geq 1-r \geq \frac12$ we get 
$$ |\pper(\tilde{c}) - \ptable(c)| \leq 2r.$$
\end{proof}

Since we already have bounds on $\pper$, we can therefore deduce bounds on $\ptable$.
For instance, with $2$ colors, we see that 
$\ptable(c) \geq \frac13 - \kappa \frac{\mathcal{A}(K(\mathbf{P},1))}{\mathcal{A}(\mathbf{P})}$. 
This bound becomes sharper as the table parallelogram $\mathbf{P}$ becomes bigger.
\begin{proof}[Proof of Theorem~\ref{ineg2}]
If $c$ is an asymptotic coloring, let $\tilde{c_R}$ be the periodic version of $c_{|[-R;R]^2}$ (that is, restrict $c$ to $[-R;R]^2$ and tile the plane by repeating this square). By Lemma~\ref{choco},
\begin{equation} \label{estim}
|\pper(\tilde{c_R}) - \ptable(c_{|[-R;R]^2}) | \leq \kappa \frac{\mathcal{A}(K([-R;R]^2,1))}{\mathcal{A}([-R;R]^2)} \leq \kappa \frac{4R}{R^2} = \frac{4\kappa}{R}.
\end{equation}
The probability $\ptable(c_{|[-R;R]^2})$ converges to $p(c)$ as $R\rightarrow +\infty$. The bounds on $\pper(\tilde{c_R})$ provided by Theorem~\ref{ineg} and the estimate~(\ref{estim}) yield Theorem~\ref{ineg2}.
\end{proof}

It may seem a bit odd to chose a square to define asymptotic colorings in definition~\ref{defas}. One could have, for instance, replaced $[-R,R]^2$ by the disk $D(O,R)$ in this definition. The advantage of the square is that it made the previous proof easier, given that periodic colorings were already studied. Getting similar results for a disk (or with any open set bounded by a smooth curve) shouldn't be harder, but it would require a result similar to Lemma~\ref{huitre} with an estimation similar to the one in Lemma~\ref{choco}.

However, it should be noted that the value of $p(c)$ defined by asymptotic colorings on different shapes may differ. For instance, consider the 2-coloring of Figure~\ref{contrex} where stripes are $\frac{\sqrt{3}}{2}$ apart. In the striped region the probability is approximately $\frac13$ (see Section \ref{2col}), and it is $1$ in the rest of the plane. So $p(c)$ is related to the fraction of a square (resp. disk) occupied by the striped region, and this fraction depends on the shape.

\begin{figure}[!ht]
\center
\includegraphics[scale=0.45]{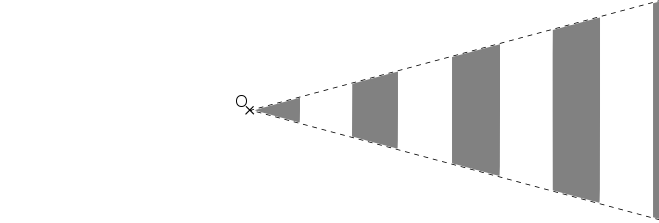}
\caption{\label{contrex} An asymptotic 2-coloring of the plane}
\end{figure}

The inequality of Theorem~\ref{ineg2} should nevertheless remain true for these different possible values of asymptotic $p(c)$'s.

\subsection{Higher dimensions}
\label{dim}
The previous considerations can be extended to dimension $d \geq 3$, however, 
it becomes more complicated to write down. We have to consider a basis of 
vector leaving our coloring unchanged; we still denote by $\mathbf{P}$ the 
parallelepiped induced by these vectors.

The first process for choosing a needle now consist in~: 
\begin{itemize}
	\item choosing one point uniformly in the parallelepiped $ \mathbf{P} $ as 
	one point of the needle~;
	\item choosing independently one point uniformly on the unit sphere 
	$S^{d-1}$, to give the relative position of the second point.
\end{itemize}

Sending a unit-length graph on the $d$-dimensional space becomes a bit more 
tricky. We have to choose one initial point in $ \mathbf{P} $, and an independent 
rotation of the graph; (by using the Haar measure of $SO(d)$). The second process now consists 
in: rotating the graph thanks to the matrix of $SO(d)$ chosen, translating it 
to the point of $\mathbf{P}$ chosen, and choosing one of the 
edges independently as your needle.

One can verify that the proof of Lemma~\ref{huitre} adapts to that definitions. 
Crucial 
points of the proof are the invariance of the Haar measure under matrix product, and the 
fact that the image measure of said Haar measure by the application 
$M\mapsto Mv$, where $v$ is a given unit vector, is the uniform Lebesgue 
measure of $S^{d-1}$.

Thus, to find lower bonds on our probability, the same techniques may be applied 
in any finite dimension.
%
Remark that in higher dimension, better minorants may be provided by 
Theorems~\ref{ineg}~and~\ref{ineg2}, since new graphs can 
become unit-length. For example, with $3$ colors in dimension $3$, the regular 
tetrahedron gives a lower bound 
of $\frac 1 6$, while we could only achieve a lower bound of $\frac 1 {11}$ in dimension $2$. More generally, consider the complete graph 
$K_{k+1}$ in dimension $k$ (also known as the regular $k+1$-simplex): it is a unit distance graph and is not $k$-colorable, so the 
probability of getting a monochromatic needle is at least $\frac{1}{\binom{k+1}{2}}$ 
for $k$ colors in dimension $k$.

\subsection*{Acknowledgements}
We are very grateful to David Naccache for his constant help in the writing of
this paper, and to Eric Brier for many ideas and insight on the subject of 
unit distance graphs.

\bibliographystyle{abbrv}
\bibliography{Tiling}

\end{document}